\newtheorem{theorem}{Теорема}
\newtheorem{definition}{Определение}
\title{О центральной предельной теореме для однородных нелинейных цепей Маркова в дискретном времени\footnote{Статья подготовлена в ходе проведения исследования в рамках Программы фундаментальных исследований Национального исследовательского университета «Высшая школа экономики» (НИУ ВШЭ). Работа выполнена при финансовой поддержке РФФИ, грант №20-01-00575a.}}
\author{Александр Щеголев\footnote{Национальный исследовательский университет <<Высшая школа экономики>>, Москва, ashchegolev@hse.ru}}
\begin{document}
\maketitle
\begin{abstract}
Класс нелинейных марковских процессов характеризуется наличием зависимости текущего состояния процесса от текущего распределения процесса в дополнение к зависимости от предыдущего состояния процесса. Благодаря этой особенности, данные процессы характеризуются сложным предельным поведением и эргодическими свойствами, для которых привычных критериев для марковских процессов недостаточно. Будучи разновидностью нелинейных марковских процессов, нелинейные цепи Маркова унаследовали эти особенности. В работе исследованы условия для выполнения центральной предельной теоремы для однородных нелинейных цепей Маркова в дискретном времени с конечным дискретным фазовым пространством. Также приведен краткий обзор известных результатов о эргодических свойствах нелинейных цепей Маркова. Полученный результат дополняет существующие результаты в данной области и может быть полезен для дальнейших приложений в статистике.
\medskip

\noindent{\em Ключевые слова:} нелинейные марковские цепи, центральная предельная теорема, экспоненциальная скорость сходимости
\end{abstract}

\section{Введение}
\label{S:1}
Класс случайных процессов, имеющих зависимость от закона распределения впервые был впервые предложен А.А.~Власовым \cite{Vlasov:1938} для описания динамики плазмы, представляющей собой большую систему заряженных частиц. В дальнейшем, общий класс процессов был изучен Г.П.~Маккином \cite{McKean:1966}. Данные процессы в литературе называют процессами Маккина-Власова или нелинейными марковскими процессами. Процессы этого вида естественным образом возникают при описании предельного поведения системы большого количества взаимодействующих частиц. Данный вид процессов изучался в работах множества авторов, среди которых можно выделить монографии А.С.~Шнитмана \cite{Sznitman:1991} и В.Н.~Колокольцова \cite{Kolokoltsov:2010}. В отличие от марковских процессов, нелинейные марковские процессы могут обладать сложными эргодическими свойствами \cite{Muzychka:2011,Neumann:2022}, так что классические результаты для марковских процессов могут быть неприменимы.

Изучению эргодических свойств нелинейных цепей Маркова также посвящен ряд работ.
Одни из первых результатов изучения эргодических свойств однородных нелинейных цепей Маркова в дискретном времени были получены О.А.~Бутковским \cite{Butkovsky:2014}. В работе были установлены условия существования и единственности инвариантной меры для нелинейных марковских цепей, а также показано на примерах, что стандартного результата для цепей Маркова (выполнения условия Маркова-Добрушина) в данном случае недостаточно для эргодичности процесса. В работе M.Х.~Сабурова \cite{Saburov:2016} рассматривается определенный класс (более узкий, чем в работе \cite{Butkovsky:2014}) нелинейных полиномиальных Марковских операторов на конечном пространстве, для которых показано, что при некоторых условиях на коэффициент эргодичности Добрушина достигается эргодичность. В работах \cite{Shchegolev:2021,Shchegolev:2022} также рассматриваются нелинейные цепи Маркова в дискретном времени, обобщается оценка \cite{Butkovsky:2014} для скорости сходимости на случай использования переходных вероятностей за несколько шагов. На примерах показано, что для переходных ядер за несколько шагов скорость сходимости может быть выше, и показан пример, демонстрирующий, что невыполнение условий \cite{Butkovsky:2014} в общем случае не препятствует сходимости к единственной инвариантной мере. Также в работе \cite{Shchegolev:2022} представлен результат об условиях выполнения закона больших чисел для нелинейных цепей Маркова в дискретном времени. В работе Б.А.~Нойман \cite{Neumann:2022} рассматриваются эргодические свойства нелинейных цепей Маркова с конечным пространством состояний в непрерывном времени, где показано, что для эргодичности нелинейной цепи Маркова с непрерывным временем должен существовать непрерывный нелинейный генератор.

В данной работе мы ограничиваемся рассмотрением класса нелинейных цепей Маркова в дискретном времени. Статья рассматривает условия выполнения центральной теоремы для нелинейных цепей Маркова, продолжая идею \cite{Shchegolev:2022}, где был рассмотрен соответствующий закон больших чисел. Полученный результат важен для статистических приложений и дальнейшего построения статистики.

\section{Постановка задачи и основной результат}
\label{S:2}

Прежде всего напомним определение нелинейной цепи Маркова в дискретном времени.

\begin{definition}
Нелинейной цепью Маркова c дискретным временем назовем процесс $\left(X_n^\mu\right)_{n\in\mathbb{Z_+}}$, принимающий значения на пространстве состояний $(E, \mathcal{E})$, который имеет начальное распределение $\mathcal{L}\left(X_0^\mu\right) = \mu$, $\mu \in \mathcal{P}(E)$ и переходные вероятности
\begin{align*}
P_{\mu_n}(x, B) =& \mathbb{P}\left(X_{n+1}^\mu \in B | X_n^\mu = x_n,\dots, X_{0}^\mu = x_{0};\right.&\\
&\left.\mathcal{L}(X^\mu_n) = \mu_n, \dots, \mathcal{L}(X^\mu_0) = \mu_0\right)&& \\
=& \mathbb{P}\left(X_{n+1}^\mu \in B | X_n^\mu = x; \mathcal{L}(X^\mu_{n-1}) = \mu_{n-1}\right),
\end{align*}
где $x_0,\dots,x_n \in E$, $B \in \mathcal{E}$, $n \in \mathbb{Z}_{+}$.
\end{definition}

Рассматриваемый далее результат устанавливает достаточные условия, чтобы для нелинейной марковской цепи с конечным дискретным фазовым пространством выполнялась центральная предельная теорема.

\begin{theorem}
Пусть нелинейная цепь Маркова $X_k^\mu$, определена на измеримом пространстве состояний $(E, \mathcal{E})$ и имеет единственную инвариантную меру, скорость сходимости к которой экспоненциальна. Обозначим через $X_k^\pi$ копию исходной нелинейной цепи Маркова $X_k^\mu$ с начальным распределением, равным инвариантной мере $\pi$, и переходными вероятностями $P_{x,y}^\pi$, равномерно отделенными от нуля,
\begin{equation*}
\inf_\mu \min_{x,x'} P^\mu_{x,x'} >0.    
\end{equation*}
Тогда для нелинейной цепи Маркова выполнена центральная предельная теорема,
\begin{align*}
\mathbb{E}\left[g\left(\frac{S_n}{\sqrt{n}}\right)\right] \rightarrow \mathbb{E}\left[g(\eta)\right],\quad \eta\sim\mathcal{N}(0, \sigma^2),
\end{align*}
где $g \in \mathcal{C}_b$,
\begin{gather*}
S_n = \sum_{i=0}^{n-1}\left(f(X^\mu_i)-{\mathbb{E}}[f(X^\pi_0)]\right),\\
\sigma^2 = \mathbb{E}_\pi\left[\sum_{0\le i\le j \le \infty}\left(f(X^\mu_i)-{\mathbb{E}}[f(X^\pi_0)]\right)\left(f(X^\mu_j)-{\mathbb{E}}[f(X^\pi_0)]\right)\right].
\end{gather*}
\label{nmcclt}
\end{theorem}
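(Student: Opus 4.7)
Основная стратегия~--- свести задачу к центральной предельной теореме для стационарной копии $X_n^\pi$ и показать, что частичные суммы $S_n^\mu$ отличаются от своих стационарных аналогов $S_n^\pi$ на величину порядка $o(\sqrt{n})$. Благодаря инвариантности меры $\pi$ имеем $\mathcal{L}(X_n^\pi) = \pi$ при всех $n$, поэтому $X_n^\pi$~--- это обычная однородная марковская цепь с \emph{линейным} переходным ядром $P^\pi$, не зависящим от текущей меры. Равномерная отделённость переходных вероятностей от нуля на конечном множестве состояний обеспечивает строгое условие Доблина, откуда у оператора $P^\pi$ имеется спектральный зазор и цепь $X_n^\pi$ является геометрически эргодической.

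К стационарной цепи $X_n^\pi$ применим классический метод мартингальной аппроксимации Гордина: поскольку $P^\pi$ обладает спектральным зазором, уравнение Пуассона $(I - P^\pi) h = f - \mathbb{E}_\pi[f(X_0^\pi)]$ разрешимо в ограниченных функциях, и можно записать $S_n^\pi = M_n + h(X_0^\pi) - h(X_n^\pi)$, где $M_n$~--- стационарный эргодический мартингал с равномерно ограниченными приращениями. По ЦПТ для стационарных мартингалов $M_n / \sqrt{n}$ слабо сходится к $\mathcal{N}(0, \sigma^2)$ с указанной в условии асимптотической дисперсией, а равномерно ограниченный остаточный член $(h(X_0^\pi) - h(X_n^\pi))/\sqrt{n}$ стремится к нулю; тем самым ЦПТ устанавливается для $S_n^\pi / \sqrt{n}$.

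Для переноса результата на исходный процесс $X_n^\mu$ построим склейку процессов $X_n^\mu$ и $X_n^\pi$. Экспоненциальная сходимость $\|\mathcal{L}(X_n^\mu) - \pi\|_{TV} \le C r^n$ при $r < 1$ (гарантируемая условием теоремы) в сочетании с равномерной нижней границей $\delta = \inf_\mu \min_{x,x'} P^\mu_{x,x'} > 0$ обеспечивает, что распределения $P_{\mu_n}(x,\cdot)$ и $P^\pi(x,\cdot)$ на каждом шаге имеют общую часть массой не менее $\delta$; это позволяет применить склейку максимальной связности, при которой момент склеивания $\tau$ имеет экспоненциально убывающие хвосты. После момента $\tau$ траектории продолжаем согласованно, так что $|S_n^\mu - S_n^\pi| \le 2 \|f\|_\infty \tau$ и $(S_n^\mu - S_n^\pi)/\sqrt{n} \to 0$ почти наверное. Применяя лемму Слуцкого, получаем искомую слабую сходимость $S_n^\mu / \sqrt{n} \Rightarrow \mathcal{N}(0, \sigma^2)$, что и влечёт сходимость $\mathbb{E}[g(S_n/\sqrt{n})] \to \mathbb{E}[g(\eta)]$ для любых $g \in \mathcal{C}_b$.

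Главная техническая сложность~--- корректное построение склейки в нелинейном случае: поскольку ядро $P_{\mu_n}$ зависит от меры и отличается от $P^\pi$, наивная склейка через общий источник случайности для одного и того же ядра неприменима. Ключевым моментом здесь является именно условие равномерной положительности $\delta > 0$, которое гарантирует непустую общую часть любой пары ядер $P_{\mu_n}$ и $P^\pi$ в каждой точке, и уже доказанная экспоненциальная сходимость $\mu_n \to \pi$, контролирующая накопление различий между ядрами. Совмещение этих двух ингредиентов даёт требуемое экспоненциальное убывание хвостов $\tau$, после чего остальные шаги реализуются стандартными средствами.
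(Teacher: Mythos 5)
Ваш план расходится с авторским доказательством на обоих ключевых шагах, и в целом это жизнеспособная альтернатива. Для стационарной копии $X_n^\pi$ вы используете мартингальную аппроксимацию Гордина через уравнение Пуассона, тогда как в статье ЦПТ для $S_n^\pi$ выводится из теоремы Ибрагимова--Линника для последовательностей с перемешиванием; оба пути стандартны и проходят при условии Дёблина, которое здесь обеспечено равномерной отделённостью ядра от нуля. Для перехода к нестационарному случаю статья не строит склейку: вместо этого отношение $P^{\mu_n}_{x,y}/P^{\pi}_{x,y}$ зажимается между $1\pm e^{\ln K - Cn}$, математическое ожидание $\mathbb{E}[g(S_n/\sqrt{n})]$ оценивается сверху и снизу через стационарный аналог, умноженный на произведения $\prod_j\left(1\pm e^{\ln K - Cj}\right)$, вклад первых $k$ слагаемых контролируется модулем непрерывности $g$, и затем берутся $\liminf/\limsup$. Ваш путь через склейку и лемму Слуцкого концептуально проще и даёт более сильный вывод (траекторную близость, а не только близость математических ожиданий), но именно в нём имеется пробел.

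Пробел следующий: вы определяете $\tau$ как момент склеивания и утверждаете, что после $\tau$ траектории продолжаются согласованно, откуда $|S_n^\mu - S_n^\pi|\le 2\|f\|_\infty\,\tau$. Это неверно в заявленном виде: ядра $P_{\mu_n}$ и $P^{\pi}$ не совпадают ни при каком конечном $n$ (они лишь экспоненциально сближаются), поэтому даже после встречи $X_\tau^\mu = X_\tau^\pi$ максимальная склейка на каждом последующем шаге разводит траектории с вероятностью порядка $\|P_{\mu_n}(x,\cdot)-P^{\pi}(x,\cdot)\|_{TV}\le K e^{-Cn}$, и цепи могут расцепляться снова; первый момент встречи не контролирует $|S_n^\mu - S_n^\pi|$. Оценивать следует полное число несовпадений $\#\{i<n:\ X_i^\mu\ne X_i^\pi\}$ (эквивалентно, последний момент расцепления): суммируемость $\sum_n K e^{-Cn}<\infty$ вместе с равномерной нижней границей $\delta>0$ по лемме Бореля--Кантелли даёт, что это число почти наверное конечно (и имеет экспоненциальные хвосты), после чего ваша оценка и лемма Слуцкого действительно проходят. Отмечу также, что сама склейка корректна в том смысле, что поток маргиналов $(\mu_n)$ детерминирован и не зависит от конструкции склейки, так что пошаговая максимальная склейка сохраняет оба маргинальных закона, --- это стоило бы проговорить явно. Вы упоминаете основную трудность в последнем абзаце, но не устраняете её; в текущем виде шаг со склейкой требует указанной доработки.
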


\begin{proof}
Прежде всего заметим, что поскольку функция $g$ ограничена, то для доказательства слабой сходимости достаточно рассмотреть неотрицательные $g\in \mathcal{C}_b$, такие что $g \ge \kappa$, где $\kappa$ -- некоторая положительная постоянная.

Также обратим внимание на то, что инвариантная копия исходного процесса, $X_k^\pi$, является однородной цепью Маркова, не зависящей от меры, а также обладает $\beta$-перемешиванием, благодаря равномерной экспоненциальной сходимости относительно начального распределения.

Рассмотрим последовательность сумм центрированных компонент для инвариантной копии исходного процесса, 
\begin{equation*}
S^{\pi}_n = \sum_{k=0}^{n-1}f(X^\pi_k)-{\mathbb{E}}[f(X^\pi_0)].
\end{equation*}

Тогда, в соответствии с результатами Ибрагимова и Линника \cite[Теорема~18.5.3]{Ibragimov:1965}, мы имеем, что 
\begin{equation*}
\mathbb{E}[\left|f(X^\pi_k)-{\mathbb{E}}[f(X^\pi_0)]\right|^{2+\zeta}]<\infty
\end{equation*}
 для некоторого $\zeta > 0$ и $\sum_{n=1}^{\infty}\alpha(n)^{\zeta/(2+\zeta)} <\infty$, где $\alpha(n)$ -- коэффициент перемешивания. Дисперсия случайной величины $S^\pi_n$, 
\begin{equation*}
\sigma^2 = \text{Var}(S^\pi_n) = \mathbb{E}_\pi\left[\sum_{0\le i\le j \le \infty}\left(f(X^\pi_i)-{\mathbb{E}}[f(X^\pi_0)]\right)\left(f(X^\pi_j)-{\mathbb{E}}[f(X^\pi_0)]\right)\right],
\end{equation*}
является конечной благодаря тому, что случайные величины $f(X^\pi_k)-{\mathbb{E}}[f(X^\pi_0)]$ удовлетворяют условию $\beta$-перемешивания, в данном случае, экспоненциального, и ограничены.

Тогда, последовательность $S^\pi_n$ удовлетворяет центральной предельной теореме, таким образом, $S^\pi_n \sim \mathcal{N}(0, \sigma^2/n)$. При этом случай $\sigma^2 = 0$ может быть интерпретирован как вырожденное нормальное распределение или $\delta$-мера Дирака, сконцентрированная в точке $0$.

Таким образом, центральная предельная теорема верна для нелинейной цепи Маркова, стартующей из инвариантного распределения, а именно
\begin{align}
\mathbb{E}\left[g\left(\frac{S^\pi_n}{\sqrt{n}}\right)\right] \to \mathbb{E}\left[g(\eta)\right],\quad \eta\sim\mathcal{N}(0,\sigma^2).
\label{clt_inv_nmc}
\end{align}
В случае $\sigma = 0$ получаем тождественное равенство обеих частей \eqref{clt_inv_nmc} величине $g(W)$, где $W$ -- некоторая константа.

Далее, покажем, что центральная предельная теорема верна и в более общем случае, то есть для нелинейной цепи Маркова, не находящейся изначально в стационарном состоянии.

Обозначим $\xi(X^\mu_i) = f(X^\mu_i)-\mathbb{E}[f(X_0^\pi)]$, тогда
\begin{flalign*}
\mathbb{E}\left[g\left(\frac{S_{n}}{\sqrt{n}}\right)\right] =&  \mathbb{E}\left[g\left(\frac{\sum_{i=0}^{n-1}\xi(X^\mu_i)}{\sqrt{n}}\right)\right] \\=&\iint\dots\int g\left(\frac{\sum_{i=0}^{n-1}\xi(X^\mu_i)}{\sqrt{n}}\right)\mu_0(x_0)P^{\mu_0}_{x_0,dx_1}P^{\mu_1}_{x_1,dx_2}\dots P^{\mu_{n-2}}_{x_{n-2},dx_{n-1}}\\
=&\iint\dots\int g\left(\frac{\xi(x_0)+\dots+\xi(x_{n-1})}{\sqrt{n}}\right)\mu_0(x_0)P^{\mu_0}_{x_0,dx_1}\dots P^{\mu_{n-2}}_{x_{n-2},dx_{n-1}}.
\end{flalign*}

Благодаря существованию единственной инвариантной меры и экспоненциальной скорости сходимости начальной меры к ней, имеем, $\|\mu_n-\pi\|_{TV} \le 2e^{-Cn}$, где $C$ -- некоторая константа, не зависящая от $n$. Исходя из предположения, что все элементы переходного ядра $P^{\pi}_{x,y}$ отделены от нуля, получаем нижнюю и верхнюю оценки:
\begin{equation*}
-\frac{\|\mu_n-\pi\|_{TV}}{2P^{\pi}_{x,y}} \le \frac{P^{\mu_n}_{x,y} - P^{\pi}_{x,y}}{P^{\pi}_{x,y}} \le \frac{\|\mu_n-\pi\|_{TV}}{2P^{\pi}_{x,y}},
\end{equation*}
откуда мы делаем вывод, что
\begin{equation}
1 - e^{\ln K - Cn} \le 1 - \frac{e^{-Cn}}{P^\pi_{x,y}} \le \frac{P^{\mu_n}_{x,y}}{P^{\pi}_{x,y}} \le 1 + \frac{e^{-Cn}}{P^\pi_{x,y}} \le 1 + e^{\ln K - Cn},
\label{Pnmc_Pmc}
\end{equation}
где $K$ -- некоторая константа, не зависящая от $n$.

Обозначим $\rho_g$ -- модуль непрерывности функции $g$ на замкнутом интервале $[-\|\xi\|, \|\xi\|]$ и выберем $\delta>0$ такое, что $\rho_g(\delta) < \kappa < g$. Также выберем $n$ по заданным $\delta$, $k$: $n \ge k \|\xi\|/\delta$, где $k\in\mathbb{Z}_+$, и, начиная с момента $k+1$, переходные ядра $P^{\mu_n}_{x,y}$ близки к инвариантному переходному ядру $P^{\pi}_{x,y}$ согласно \eqref{Pnmc_Pmc}. Определим переходное ядро за $k$ шагов как $Q^{\mu_0, k}_{x_0, dx_{k}}$, такое что
\begin{equation*}
\mu_{k} = \mu_0 Q^{\mu_0, k}_{x_{0}, dx_{k}}(dx_{k}) = \int \mu_0(dx_0)Q^{\mu_0, k}(x_0, dx_{k}).
\end{equation*}

Используя введенные обозначения, перепишем
\begin{flalign}
\mathbb{E}\left[g\left(\frac{S_{n}}{\sqrt{n}}\right)\right] =
\iint\dots\int g\left(\frac{\sum_{i=0}^k\xi(x_i)}{\sqrt{n}}+ \frac{\sum_{i=k+1}^{n-1}\xi(x_{i})}{\sqrt{n}}\right)
\mu_0Q^{\mu_0, k}(dx_{k})\prod_{j=k}^{n-2}P^{\mu_k}_{x_j,dx_{j+1}}\nonumber\\
\le \rho_g(\delta)+\prod_{j=k-1}^{n-2}\left(1 + e^{\ln K - Cj}\right)\left(\iint\dots\int g\left(\frac{\sum_{i=k+1}^{n-1}\xi(x_{i})}{\sqrt{n}}\right)\pi(dx_{k})\prod_{j=k}^{n-2}P^{\pi}_{x_j,dx_{j+1}}\right)
\label{ESn}
\end{flalign}

Распишем в явном виде аналогичное выражение для $S_n^\pi$:
\begin{flalign*}
\mathbb{E}\left[g\left(\frac{S^\pi_{n}}{\sqrt{n}}\right)\right] =&
\iint\dots\int g\left(\frac{\sum_{j=0}^{k}\xi(x_{j})}{\sqrt{n}}+ \frac{\sum_{j=k+1}^{n-1}\xi(x_{j})}{\sqrt{n}}\right)
\pi(dx_{k})\prod_{j=k}^{n-2}P^{\pi}_{x_j,dx_{j+1}}\\
\ge& -\rho_g(\delta)+\iint\dots\int g\left(\frac{\sum_{j=k+1}^{n-1}\xi(x_{j})}{\sqrt{n}}\right)\pi(dx_{k})\prod_{j=k}^{n-2}P^{\pi}_{x_j,dx_{j+1}}.
\end{flalign*}

Теперь мы можем подставить последний результат в формулу \eqref{ESn}, благодаря чему получаем
\begin{flalign*}
\mathbb{E}\left[g\left(\frac{S_{n}}{\sqrt{n}}\right)\right] &\le 
\rho_g(\delta) + \left(\rho_g(\delta) + \mathbb{E}\left[g\left(\frac{S^\pi_{n}}{\sqrt{n}}\right)\right]\right)\prod_{j=k-1}^{n-2}\left(1 + e^{\ln K - Cj}\right).
\end{flalign*}

Аналогичным образом мы можем вычислить оценку и для нижней границы $\mathbb{E}\left[g\left(\frac{S_{n}}{\sqrt{n}}\right)\right]$, получая
\begin{flalign*}
\mathbb{E}\left[g\left(\frac{S_{n}}{\sqrt{n}}\right)\right]
\ge& -\rho_g(\delta) + \prod_{j=k-1}^{n-2}\left(1-e^{\ln K - Cj}\right)\times\\
&\left(\iint\dots\int g\left(\frac{\sum_{i=k+1}^{n-1}\xi(x_i)}{\sqrt{n}}\right)
\pi(dx_{k})\prod_{j=k+1}^{n-2}P^{\pi}_{x_j,dx_{j+1}}\right)\\
\ge& -\rho_g(\delta) + \left(\mathbb{E}\left[g\left(\frac{S_n^\pi}{\sqrt{n}}\right)\right]-\rho_g(\delta)\right)\times \prod_{j=k-1}^{n-2}\left(1-e^{\ln K - Cj}\right).
\end{flalign*}

В результате, получаем границы для $\mathbb{E}\left[g\left(\frac{S_{n}}{\sqrt{n}}\right)\right]$,
\begin{equation*}
\begin{cases}
\displaystyle
\mathbb{E}\left[g\left(\frac{S_{n}}{\sqrt{n}}\right)\right] \ge -\rho_g(\delta) + \left(\mathbb{E}\left[g\left(\frac{S_n^\pi}{\sqrt{n}}\right)\right]-\rho_g(\delta)\right)\times \prod_{j=k-1}^{n-2}\left(1-e^{\ln K - Cj}\right),\\
\displaystyle
\mathbb{E}\left[g\left(\frac{S_{n}}{\sqrt{n}}\right)\right] \le
\rho_g(\delta) + \left(\rho_g(\delta) + \mathbb{E}\left[g\left(\frac{S^\pi_{n}}{\sqrt{n}}\right)\right]\right)\times\prod_{j=k-1}^{n-2}\left(1 + e^{\ln K - Cj}\right).
\end{cases}        
\end{equation*}

Для этих границ вычислим нижний и верхний пределы последовательности при $\delta \to 0$, $k \to \infty$ и $n \to \infty$, получая
\begin{gather*}
\liminf_{n\to\infty}\mathbb{E}\left[g\left(\frac{S_{n}}{\sqrt{n}}\right)\right] \ge \\
\liminf_{\delta\to 0}\liminf_{k\to\infty} \liminf_{n\to\infty}\left(
 -\rho_g(\delta) + \left(\mathbb{E}\left[g\left(\frac{S_n^\pi}{\sqrt{n}}\right)\right]-\rho_g(\delta)\right)\prod_{j=k-1}^{n-2}\left(1-e^{\ln K - Cj}\right)\right)
\end{gather*}
и
\begin{gather*}
\limsup_{n\to\infty}\mathbb{E}\left[g\left(\frac{S_{n}}{\sqrt{n}}\right)\right] \le \\
\limsup_{\delta\to 0} \limsup_{k\to\infty}\limsup_{n\to\infty}\left(
 \rho_g(\delta) + \left(\rho_g(\delta)+\mathbb{E}\left[g\left(\frac{S_n^\pi}{\sqrt{n}}\right)\right]\right)\prod_{j=k-1}^{n-2}\left(1+e^{\ln K - Cj}\right)\right).
\end{gather*}
Отметим, что при $\delta \to 0$, модуль непрерывности $\rho_g(\delta)$ стремится к нулю. Множители $\prod_{j={k}-1}^{n-2}\left(1-e^{\ln K - Cj}\right)$ и $\prod_{j={k}-1}^{n-2}\left(1+e^{\ln K - Cj}\right)$ при $k\to\infty$ равномерно стремятся к $1$ благодаря показателю в экспоненте. Используя результат \eqref{clt_inv_nmc}, мы получаем, что
для любой неотрицательной функции $g \in \mathcal{C}_b$, большей $\kappa$,
\begin{equation*}
\liminf_{n\to\infty} \mathbb{E}[g(S_n/\sqrt{n})] \ge  
\mathbb{E}\left[g(\eta)\right],
\end{equation*}
и
\begin{equation*}
\limsup_{n\to\infty} \mathbb{E}[g(S_n/\sqrt{n})] \le
\mathbb{E}\left[g(\eta)\right],
\end{equation*}
где $\eta$ -- гауссовская случайная величина с распределением $\mathcal{N}(0, \sigma^2)$.

Стало быть, эти неравенства имеют место и для любой функции $g\in \mathcal{C}_b$. Тогда по теореме о двух милиционерах, имеем
\begin{equation*}
\lim_{n\to\infty} \mathbb{E}[g(S_n/\sqrt{n})] =  
\mathbb{E}\left[g(\eta)\right],
\end{equation*}
что эквивалентно слабой сходимости, а следовательно, как и требовалось, центральная предельная теорема выполнена. Теорема \ref{nmcclt} доказана.
\end{proof}


\end{document}